\theoremstyle{plain}
\newtheorem{theorem}{Theorem}
\newtheorem{lemma}[theorem]{Lemma}
\newtheorem{corollary}[theorem]{Corollary}
\newtheorem{proposition}[theorem]{Proposition}
\theoremstyle{definition}
\newtheorem*{theorem*}{Theorem}
\newtheorem*{corollary*}{Corollary}
\newtheorem{example}[theorem]{Example}
\theoremstyle{remark}
\newtheorem{remark}[theorem]{Remark}
\newcommand{\lreg}[1]{\prescript{}{#1}{#1}}
\newcommand{\soc}[1]{\mathrm{soc}(#1)}
\newcommand{\fac}{^{\text{fct}}}
\newcommand{\img}{^{\text{im}}}
\begin{document}

\author[Buhphang et al.]{Ardeline M. Buhphang $^{\dagger}$, Rishabh Goswami $^{\dagger}$ and Amit Kuber $^{\ddagger}$}
\address{\centering$^{\dagger}$ Department of Mathematics, North-Eastern Hill University, Shillong, India\\
       $^{\ddagger}$ Department of Mathematics and Statistics, Indian Institute of Technology, Kanpur, India\\
       Corresponding Author: Amit Kuber}

\email{ardeline@nehu.ac.in, rishabhgoswami.math@gmail.com, askuber@iitk.ac.in}

\title{A note on injectivity of monomial algebras}
\keywords{self-injective, socle-injective, tree modules, monomial algebra, quasi-Frobenius algebra}
\subjclass[2020]{16D50, 16G20, 16L60}

\begin{abstract}
We show that a monomial algebra $\Lambda$ over an algebraically closed field $K$ is self-injective if and only if each map $\mathrm{soc}(\prescript{}{\Lambda}{\Lambda})\to\prescript{}{\Lambda}{\Lambda}$ can be extended to an endomorphism of $\prescript{}{\Lambda}{\Lambda}$, and provide a complete classification of such algebras. As a consequence, we show that the class of self-injective monomial algebras is a subclass of Nakayama algebras.
\end{abstract}
\maketitle

\section{Introduction}
Let $K$ be an algebraically closed field and $\Lambda$ be a finite dimensional, associative, unital $K$-algebra. We will only consider finite dimensional left $\Lambda$-modules. The algebra $\Lambda$ is assumed to be a basic connected algebra, i.e., $\Lambda$ is (isomorphic to) a bound quiver algebra $KQ/\langle\rho\rangle$ for a finite quiver $Q$ and a set of relations $\rho$, thanks to \cite[Theorem~II.3.7] {assem2006elements}. Say that  $\Lambda$ is a \emph{monomial algebra} if it is (isomorphic to) a bound quiver algebra $ KQ/\langle\rho\rangle$, where $\rho$ consists only of monomials (paths). Monomial algebras are also known as \emph{zero relation algebras}. Tacitly, we assume that no two monomial relations in $\rho$ are comparable, i.e., no path in $\rho$ is properly contained in another path in $\rho$.

Recall that $\Lambda$ is left \emph{self-injective} if the left regular module $\lreg{\Lambda}$ is injective. A closely related notion of \emph{quasi-Frobenius algebras} was introduced and studied by Nakayama in \cite{nakayama1939frobeniusean,nakayama1941frobeniusean}; in fact, these two notions coincide in the case of finite dimensional algebras.

Self-injectivity is a strong condition, and one of the weaker conditions, socle-injectivity, was introduced by Amin et al. \cite{aminsoc}. Say that $\Lambda$ is \emph{socle-injective} if each map $\soc{\lreg{\Lambda}}\to\lreg{\Lambda}$ can be extended to an endomorphism of $\lreg{\Lambda}$. In general, socle-injectivity is strictly weaker than self-injectivity as the following example demonstrates. 
\begin{example}
    The polynomial algebra $K[x]$ is an infinite dimensional monomial $K$-algebra presented as the path algebra of the single loop quiver. It is socle-injective because $\soc{K[x]}=0$ but not self-injective for the map $xf(x)\mapsto f(x):xK[x]\to K[x]$ cannot be extended to an endomorphism of $K[x]$. 
\end{example}
In stark contrast to this example, our main theorem below establishes an equivalence between these two notions of injectivity in the context of finite dimensional monomial algebras.

\begin{theorem}\label{main}
The following conditions are equivalent for a finite dimensional monomial $K$-algebra $\Lambda=KQ/\langle\rho\rangle$:
\begin{enumerate}
    \item the algebra $\Lambda$ is self-injective;
    \item the socle $\soc{P(S)}$ is simple for each simple $S$, where $P(S)$ is the projective cover of $S$, and the map that sends the iso-class $[S]$ of a simple $\Lambda$-module $S$ to the iso-class $[\soc{P(S)}]$ is a permutation of the finite set of iso-classes of simple modules;
    \item the algebra $\Lambda$ is socle-injective;
    \item either $\Lambda\cong K$ or there are $n,l>1$ such that $Q=C_n$ and $\langle\rho\rangle=R_{C_n}^l$, where $C_n$ is the cyclic quiver with $n$ vertices and $R_{C_n}$ is the arrow ideal of $KC_n$.
\end{enumerate}
\end{theorem}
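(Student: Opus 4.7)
The plan is to prove the cycle of implications $(4)\Rightarrow(1)\Rightarrow(3)\Rightarrow(2)\Rightarrow(4)$. The implication $(4)\Rightarrow(1)$ is the standard fact that the truncated cyclic Nakayama algebra $KC_n/R_{C_n}^l$ is Frobenius---each indecomposable projective is uniserial of length $l$ with simple socle, and the Nakayama permutation is the $l$-th iterate of the cyclic shift---while $\Lambda\cong K$ is trivially self-injective. The implication $(1)\Rightarrow(3)$ is immediate from the definitions.

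For $(3)\Rightarrow(2)$, we exploit the monomial structure. A $K$-basis of $P(S_i)=\Lambda e_i$ is given by the basis paths (those not in $\langle\rho\rangle$) starting at $i$, and a $K$-basis of $\soc{P(S_i)}$ by the \emph{maximal} such paths, namely those $p$ with $\alpha p\in\langle\rho\rangle$ for every arrow $\alpha$ with $s(\alpha)=t(p)$. Under the identification $\mathrm{End}_\Lambda(\lreg{\Lambda})\cong\Lambda^{\mathrm{op}}$, socle-injectivity demands that every $\Lambda$-linear $\phi\colon\soc{\lreg{\Lambda}}\to\lreg{\Lambda}$ be realized by right multiplication with some $a\in\Lambda$. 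Assume $\soc{P(S_i)}$ contains two distinct maximal basis paths $p,q$ starting at $i$, and take $\phi(p)=p$, $\phi(q)=0$, $\phi=0$ on all other basis socle elements. The equation $pa=p$ forces $a=e_i+b$ with $b$ supported on paths $r$ satisfying $pr=0$; however $qa=q+qb$, and no basis path $r$ with $pr=0$ can satisfy $qr=q$ (such an $r$ would have to equal $e_i$, but $pe_i=p\ne 0$), so the coefficient of $q$ in $qa$ is $1$ and cannot be cancelled, contradicting $qa=0$. Hence $\soc{P(S_i)}$ is simple. Injectivity of the map $[S_i]\mapsto[\soc{P(S_i)}]$ follows by an analogous argument: if $p_i,p_{i'}$ are maximal basis paths with distinct sources $i\ne i'$ ending at a common vertex $j$, we use one of the maps $\phi(p_i)=p_{i'},\phi(p_{i'})=0$ or $\phi(p_i)=p_i,\phi(p_{i'})=p_i$---depending on whether the forward walks from $i$ and $i'$ pass through each other---to obtain the same kind of obstruction.

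For $(2)\Rightarrow(4)$, the unique maximal basis path $p_i\colon i\to\sigma(i)$ at each vertex $i$ forces out-degree at most one everywhere, since two distinct outgoing arrows would initiate two distinct maximal basis paths in $\soc{P(S_i)}$. The outgoing arrows then define a partial successor function $w$ on vertices. If some vertex $j$ had distinct incoming arrows $\alpha\colon k\to j$ and $\beta\colon l\to j$ with $k\ne l$, the walks from $k$ and $l$ merge at $j$ and proceed identically thereafter; using the incomparability of relations in $\rho$, we show that the asymmetric truncation of these two walks propagates back through ancestors to produce another coincidence of $\sigma$-values, violating injectivity. Hence in-degree is also at most one everywhere, and connectedness forces $Q$ to be either $A_n$ or $C_n$. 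The linear case $A_n$ with $n\ge 2$ is ruled out (both the source and the sink would map under $\sigma$ to the sink), leaving $Q=A_1$ (i.e.\ $\Lambda\cong K$) or $Q=C_n$; in the latter case, the permutation condition combined with incomparability of $\rho$ forces all maximal basis paths to have a common length $l$, whence $\langle\rho\rangle=R_{C_n}^l$.

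The main obstacle is the in-degree analysis in $(2)\Rightarrow(4)$: ruling out in-degree at least two at a vertex $j$ requires tracing walks backward through ancestors of $k$ and $l$, because asymmetric monomial relations involving the two incoming arrows can, a priori, truncate the merging walks at different lengths so that $\sigma(k)\ne\sigma(l)$. The contradiction to injectivity of $\sigma$ must therefore be propagated along the ancestry, using at each step that no two relations in $\rho$ are comparable. A secondary delicate point is the case analysis for the permutation step in $(3)\Rightarrow(2)$, where the obstructing $\phi$ must be chosen differently according to the combinatorial configuration of the two maximal basis paths.
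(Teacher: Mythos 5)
Your proposal is correct in outline but takes a genuinely different route from the paper's. The paper does not prove a single cycle of implications: it cites the general Nakayama-permutation criterion for $(1)\Leftrightarrow(2)$ over arbitrary finite dimensional algebras, and then only supplies $(3)\Rightarrow(2)\Rightarrow(4)$, with $(4)\Rightarrow(1)$ quoted from Assem--Simson--Skowro\'nski. Your cycle $(4)\Rightarrow(1)\Rightarrow(3)\Rightarrow(2)\Rightarrow(4)$ makes the general criterion unnecessary for monomial algebras, at the cost of needing $(4)\Rightarrow(1)$ independently (which is indeed standard, as you say). The bigger divergence is in $(3)\Rightarrow(2)$: the paper realizes $S(i)$ and $P(j)$ as tree modules and invokes Crawley-Boevey's basis of $\mathrm{Hom}$ by graph maps, deriving the contradiction from a length count on the trees $T_j$; you instead use $\mathrm{End}_\Lambda(\lreg{\Lambda})\cong\Lambda^{\mathrm{op}}$ and the linear independence of basis paths in a monomial algebra. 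Your argument checks out: the socle of $P(i)$ has basis the maximal basis paths, the equation $pa=p$ pins down $a=e_i+b$ with $pb=0$, and the coefficient of $q$ (resp.\ of the shorter path $p_i$) can never be corrected by longer paths $qr$ (resp.\ $p_{i'}s$). This is more elementary and self-contained than the paper's tree-module machinery, which in exchange gives a conceptual description of the whole Hom-space.

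Two soft spots in your $(2)\Rightarrow(4)$. First, your mechanism for bounding the in-degree --- ``propagate the asymmetric truncation back through ancestors'' --- is vague and does not obviously terminate (the vertices $k,l$ feeding into $j$ may themselves be sources, with no ancestors to propagate to). The clean way to finish is to use \emph{surjectivity} of $\sigma$ rather than injectivity: once out-degree is exactly $1$ everywhere (sinks being excluded because a sink and its immediate predecessor share the same socle), a vertex of in-degree $0$ cannot lie in the image of $i\mapsto t(p_i)$ since every maximal basis path has positive length; hence there are no sources, and counting arrows forces in-degree exactly $1$ everywhere, so $Q$ is a single cycle. (Incidentally, the paper itself is too quick here: out-degree $\le 1$ plus connectedness does not by itself give ``path or cycle'' --- two branches can merge --- so some version of this in-degree control is genuinely needed.) Second, your parenthetical ruling out $A_n$ is wrong for $n\ge 3$ with relations: the source of $A_n$ need not map to the sink under $\sigma$ (its maximal basis path may be truncated early); the correct collision is between the sink $i_0$ and its immediate predecessor $i_1$, whose unique maximal basis path is the length-one arrow $i_1\to i_0$, giving $\soc{P(i_1)}\cong S(i_0)\cong\soc{P(i_0)}$. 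Both issues are repairable without changing your overall strategy.
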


It is immediate from condition $(4)$ of the theorem that if $\Lambda\not\cong K$ then all indecomposable finitely generated projective as well as injective $\Lambda$-modules are uniserial with Loewy length $l\mbox{-}1$. Hence from \cite[\S~V.3]{assem2006elements} we get the following.

\begin{corollary}
A self-injective finite dimensional monomial $K$-algebra is a Nakayama algebra.
\end{corollary}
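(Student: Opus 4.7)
The plan is to invoke Theorem~\ref{main} directly and use condition~(4) to split into two cases: the trivial case $\Lambda \cong K$ and the genuine case where $Q = C_n$ and $\langle\rho\rangle = R_{C_n}^l$ for some $n,l>1$. In both cases I would verify that every indecomposable projective and every indecomposable injective $\Lambda$-module is uniserial, and then close the argument by citing the characterization of Nakayama algebras given in \cite[\S~V.3]{assem2006elements}, which says that a basic connected algebra is Nakayama precisely when every indecomposable projective and every indecomposable injective module is uniserial.

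For $\Lambda \cong K$ there is nothing to check: the only simple module is simultaneously its own projective cover and injective envelope, so it is trivially uniserial. For the cyclic case, I would fix a vertex $i$ of $C_n$ and describe the indecomposable projective $P(i) = \Lambda e_i$ explicitly, reading off from the monomial presentation that a $K$-basis is given by the classes of nonzero paths of length strictly less than $l$ starting at $i$. Because $C_n$ has a unique outgoing arrow at each vertex, left multiplication by the arrow ideal $R_{C_n}/R_{C_n}^l$ linearly orders these basis elements into a single chain, so the submodule lattice of $P(i)$ is a finite chain and $P(i)$ is uniserial. An entirely analogous argument, applied either to paths ending at $i$ or by passing to the opposite algebra (which has the same form since $C_n^{\mathrm{op}} \cong C_n$), yields uniseriality of the indecomposable injective $I(i)$.

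Combining the two cases and appealing to \cite[\S~V.3]{assem2006elements} completes the proof. I do not expect a serious obstacle here: the only minor verification is that the submodules of $P(i)$ are precisely the subspaces spanned by terminal segments of the chain of basis paths, which is immediate from the monomial nature of $\rho$ together with the uniqueness of the outgoing arrow at each vertex of $C_n$. In effect, the corollary is a structural reading of condition~(4), so the entire substantive content lies in Theorem~\ref{main}.
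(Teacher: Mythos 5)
Your proposal is correct and follows essentially the same route as the paper: both deduce the corollary from condition~(4) of Theorem~\ref{main} by observing that the indecomposable projectives and injectives of $KC_n/R_{C_n}^l$ (and trivially of $K$) are uniserial, and then cite the characterization of Nakayama algebras in \cite[\S~V.3]{assem2006elements}. The paper states this in one line; your version merely fills in the routine verification of uniseriality, which is fine.
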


The permutation in condition $(2)$ of Theorem \ref{main} is known as the Nakayama permutation. The equivalence $(1)\Leftrightarrow(2)$ is true for all finite dimensional algebras over arbitrary fields, and the reader is referred to lecture notes of Farnsteiner \cite{Farnsteiner2005SELFINJECTIVEAT} for a proof, which essentially follows from \cite[\S~I.4,II.4]{auslander1995representation}. While the implication $(1)\Rightarrow(3)$ is obvious from the definition, the proof of the implication $(4)\Rightarrow(1)$ follows from \cite[Proposition~V.3.8]{assem2006elements}. To complete the proof of the theorem we will show that $(3)\Rightarrow(2)\Rightarrow(4)$.

\section{Preliminaries}
Throughout the rest of the paper, assume that $\Lambda=KQ/\langle\rho\rangle$ is a finite dimensional monomial algebra. Denote the admissible ideal $\langle\rho\rangle$ by $I$ for simplicity of notation. We will freely use the equivalence between $KQ/I$-modules and bound $K$-representations of $(Q,I)$. A $K$-representation $((V_j)_{j\in Q_0}, (\varphi_\gamma)_{\gamma\in Q_1})$ of $(Q,I)$ will be abbreviated as $(V_j,\varphi_\gamma)$ when the quiver $Q$ is clear from the context. The reader is referred to the standard text \cite{assem2006elements} for unexplained terminology, notations and results about bound quiver algebras, except for the convention regarding paths: we say that $\alpha_n\cdots\alpha_1$ is a path when the source of $\alpha_{i+1}$ is the same as the target of $\alpha_i$ for $1 \le i < n$. 

Let $Q_0:=\{1,2,\cdots, m\}$ and $e_i$ denote the stationary path associated to vertex $i$. Recall from \cite[Lemma~II.2.4]{assem2006elements} that the set $\left\{\bar e_i \mid i \in Q_0\right\}$ is a complete set of primitive orthogonal idempotents of the algebra $KQ/I$, where $\bar e_i:=e_i+I$. Let $S(i)$ denote the simple module associated to vertex $i$.

Recall from \cite[Lemma~I.5.3]{assem2006elements} that the left regular module $\lreg{\Lambda}$ can be decomposed as $\bigoplus_{i\in Q_0}P(i)$, where $P(i)$ is the projective indecomposable module isomorphic to $\Lambda \bar e_i$, where $\bar e_i:=e_i+I$. The following result describes the projective indecomposable modules $P(i)$.
\begin{lemma}\cite[Lemma~III.2.4]{assem2006elements}\label{lem:projrep}
If $(P(i)_j, \varphi_\gamma)$ is the representation of $(Q,I)$ corresponding to $P(i)$, then $P(i)_j$ is the $K$-vector space with basis the set of all the $\bar{p} := p + I$, with $p$ a path from $i$ to $j$; whereas, for an arrow $\gamma: j \rightarrow k$, the $K$-linear map $\varphi_\gamma: P(i)_j \rightarrow P(i)_k$ is defined as the left multiplication by $\overline{\gamma}:= \gamma + I.$ 
\end{lemma}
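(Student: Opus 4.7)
The plan is to read off the representation directly from the module-theoretic definition $P(i)=\Lambda\bar e_i$ using the standard equivalence between left $\Lambda$-modules and bound $K$-representations of $(Q,I)$. Under that equivalence a module $M$ corresponds to the representation whose vector space at vertex $j$ is $\bar e_j M$ and whose linear map along an arrow $\gamma:j\to k$ is left multiplication by $\bar\gamma$. Substituting $M=\Lambda\bar e_i$ one gets $P(i)_j=\bar e_j\Lambda\bar e_i$ and the description of $\varphi_\gamma$ essentially for free.

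Next I would exhibit the basis. The path algebra $KQ$ has as $K$-basis the set of all paths (including the stationary paths $e_\ell$), and the subspace $e_jKQe_i\subseteq KQ$ has as basis precisely the paths from $i$ to $j$. Passing to the quotient, $\bar e_j\Lambda\bar e_i=(e_jKQe_i+I)/I$, so the residues $\bar p$ for $p$ a path from $i$ to $j$ span $P(i)_j$. To isolate a basis I would identify the kernel: since $I$ is generated by paths (monomiality), the intersection $I\cap e_jKQe_i$ is spanned by those paths from $i$ to $j$ that factor through some generator in $\rho$. Thus the $\bar p$ with $p$ not in $I$ form a $K$-basis of $P(i)_j$; equivalently, if one indexes by all paths from $i$ to $j$, the only relations are $\bar p=0$ whenever $p\in I$.

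The formula for $\varphi_\gamma$ is then immediate: for a path $p$ from $i$ to $j$ and an arrow $\gamma:j\to k$, the concatenation $\gamma p$ is a path from $i$ to $k$, and $\bar e_k\bar\gamma\bar p=\overline{\gamma p}\in P(i)_k$. Well-definedness on the quotient is automatic since $\bar\gamma\cdot I\subseteq I$, and $K$-linearity is inherited from multiplication in $\Lambda$. The only step that requires thought is the linear independence/kernel computation above; the monomial hypothesis makes this transparent because paths that are not in $I$ remain $K$-linearly independent modulo $I$, but even in the general (non-monomial) bound quiver setting the result holds with the same statement, just requiring a bit more care with relations.
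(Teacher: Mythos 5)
Your argument is correct and is essentially the standard proof of this result; the paper itself gives no proof, citing it directly from Assem--Simson--Skowro\'nski (Lemma~III.2.4), whose argument is exactly your computation $P(i)_j=\bar e_j\Lambda\bar e_i\cong e_jKQe_i/(I\cap e_jKQe_i)$ together with left multiplication by $\bar\gamma$ for the structure maps. Your added observation that monomiality makes $I\cap e_jKQe_i$ the span of the paths from $i$ to $j$ containing a relation of $\rho$ as a subpath --- so that the \emph{nonzero} residues $\bar p$ genuinely form a basis, whereas in the general bound-quiver case the residues may satisfy linear dependencies and one must extract a basis --- is the correct reading of the statement and is precisely the form in which the paper uses it later in Corollary~\ref{cor:socProj}.
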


Complete classification of indecomposable finitely generated modules for a monomial algebra is not yet known. However, Crawley-Boevey \cite{crawleytree} studied a subclass of indecomposable finitely generated modules, known as tree modules, for monomial algebras, and found a basis of Hom-set between two tree modules. We briefly recall these definitions and results.

A \emph{tree} is a simply connected finite quiver $T=(T_0,T_1)$. A \emph{rooted tree} is a tree $T$ with a special vertex $\ast$ such that there is a path from $\ast$ to every other vertex. There are two special types of subtrees of a tree. Say that a subtree $T\fac$ is a \emph{factor subtree} of $T$ if for each arrow $v\xrightarrow{a}w$ in $T$, whenever $w\in T\fac$ then $v\in T\fac$. Dually, say that a subtree $T\img$ is an \emph{image subtree} of $T$ if for each arrow $v\xrightarrow{a}w$ in $T$, whenever $v\in T\img$ then $w\in T\img$.

\begin{remark}\label{rmk:facimgrootedtree}
Suppose $T$ is a rooted tree with root $\ast$.
\begin{enumerate}
    \item If $T\fac$ is a factor subtree of $T$ then $\ast\in T\fac$.
    \item If $T\img$ is an image subtree of $T$ then $\ast\in T\img$ if and only if $T=T\img$.
    \item If $T\img$ is an image subtree of $T$ then $T\img$ is an induced rooted subtree of $T$.
\end{enumerate}
\end{remark}

For each tree $T$, we denote by $V_T$ the $KT$-module corresponding to the $K$-representation $((K)_{y\in T_0}, (1_K)_{\gamma\in T_1})$. If $ F:T\to Q$ is a quiver morphism satisfying $ F(a)\neq F(b)$ whenever $a,b\in T_1$ are distinct but share either source or target, and $ F(p)\notin I$ for any path $p$ in $T$ then there is a push-down functor $ F_\lambda:KT\mbox{-}\mathrm{Mod}\to KQ/I\mbox{-}\mathrm{Mod}$ defined by $$( F_\lambda (U_y,\psi_\gamma))_j := \bigoplus_{y' \in  F ^{-1}(j)}U_{y'},\quad( F_\lambda (U_y,\psi_\gamma))_\alpha := \bigoplus_{\gamma' \in  F ^{-1}(\alpha)}\psi_{\gamma'},$$ for a $KT$-representation $(U_y,\psi_\gamma)$, $j\in Q_0,\ \alpha\in Q_1$. The $KQ/I$-module $ F_\lambda V_T$ is called a \emph{tree module}. Gabriel showed in \cite[\S~3.5 and \S~4.1]{gabrieluniversal} that tree modules are indecomposable.

For tree modules $X,Y$ given by the data $(T_X, F_X)$ and $(T_Y, F_Y)$ respectively, a \emph{graph map} from $X$ to $Y$ is a triple $(T_X\fac,T_Y\img,\sigma:T_X\fac\to T_Y\img)$, where $T_X\fac$ is a factor subtree of $T_X$, $T_Y\img$ is an image subtree of $T_Y$, and $\sigma$ is an isomorphism of trees such that $ F_Y\circ\sigma= F_X$.
\begin{theorem}\cite{crawleytree}\label{thm:crawboe}
If $X$ and $Y$ are tree modules for $\Lambda$ given by the data $(T_X, F_X)$ and $(T_Y, F_Y)$ respectively then $\mathrm{Hom}_\Lambda(X,Y)$ is a finite dimensional vector space with a basis given by graph maps $(T_X\fac,T_Y\img,\sigma:T_X\fac\to T_Y\img)$.
\end{theorem}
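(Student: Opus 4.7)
My plan is to parametrize $\mathrm{Hom}_\Lambda(X,Y)$ by matrix coefficients and reduce the problem to the combinatorics of pairs of vertices matched under $F_X$ and $F_Y$. Concretely, using the push-down description, $X_j$ has $K$-basis $\{e_y : y \in F_X^{-1}(j)\}$ and similarly for $Y_j$, so any $\phi \in \mathrm{Hom}_\Lambda(X,Y)$ is encoded by scalars $c(y',y) \in K$ indexed by pairs $(y,y') \in T_{X,0} \times T_{Y,0}$ with $F_X(y) = F_Y(y')$, via $\phi_j(e_y) = \sum_{y'} c(y',y)\,e_{y'}$. Translating the naturality condition for each arrow $\alpha : j \to k$ of $Q$, and using the hypothesis that arrows of $T_X$ (resp.\ $T_Y$) sharing source or target have distinct $F$-images, yields three families of linear constraints: (I) a matching condition $c(y_2',y_2) = c(y_1',y_1)$ whenever $y_1 \to y_2$ in $T_X$ and $y_1' \to y_2'$ in $T_Y$ lie over a common arrow $\alpha$; (II) an \emph{image}-type vanishing $c(y',y)=0$ whenever $y$ is the target of some $\alpha$-arrow in $T_X$ but $y'$ is not the target of any $\alpha$-arrow in $T_Y$; and (III) a \emph{factor}-type vanishing $c(y',y)=0$ whenever $y'$ is the source of some $\alpha$-arrow in $T_Y$ but $y$ is not the source of any $\alpha$-arrow in $T_X$.

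Next I would form an auxiliary undirected graph $Z$ on the vertex set $\{(y,y') : F_X(y) = F_Y(y')\}$, with an edge joining $(y_1,y_1')$ and $(y_2,y_2')$ for every pair of matched arrows appearing in (I). Constraint (I) says exactly that any solution $c$ is constant on each connected component $C$ of $Z$. Because $T_X$ and $T_Y$ are trees and, over each $\alpha \in Q_1$, the arrows in $T_X$ (resp.\ $T_Y$) form a partial injection, one shows by induction on walk length in $Z$ that the two coordinate projections restrict to bijections from $C$ onto connected subtrees $T_X^C \subseteq T_X$ and $T_Y^C \subseteq T_Y$, and the resulting bijection $\sigma_C : T_X^C \to T_Y^C$ is a quiver isomorphism satisfying $F_Y \circ \sigma_C = F_X$. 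Constraint (III) then forces $T_X^C$ to be closed under $T_X$-parents (i.e.\ a factor subtree of $T_X$) whenever $c|_C \ne 0$, since any failure would produce a pair $(y, \sigma_C(y)) \in C$ with $y$ having an $\alpha$-parent in $T_X$ while $\sigma_C(y)$ could have no $\alpha$-parent in $T_Y$ (otherwise that parent pair would lie in $C$, putting the $T_X$-parent in $T_X^C$); dually, constraint (II) forces $T_Y^C$ to be an image subtree of $T_Y$.

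The payoff is a bijection between the \emph{allowed} components of $Z$ (those for which $T_X^C$ is a factor subtree and $T_Y^C$ an image subtree) and the graph maps of the theorem: each allowed $C$ defines an elementary morphism $\phi^{G_C}$ sending $e_y \mapsto e_{\sigma_C(y)}$ for $y \in T_X^C$ and $0$ otherwise, and conversely every graph map arises in this way (with $\phi^{G_C}$ being a well-defined morphism precisely because (I)--(III) are satisfied). An arbitrary $\phi$ then decomposes uniquely as $\phi = \sum_C \lambda_C \phi^{G_C}$ over allowed components, the $\phi^{G_C}$ are linearly independent because their supports $\{(y, \sigma_C(y))\}$ are pairwise disjoint, and finite-dimensionality is automatic from finiteness of $T_{X,0} \times T_{Y,0}$. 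The step I expect to be the main obstacle is the careful conversion of the vanishing constraints (II) and (III) into the factor- and image-subtree closure properties: one has to locate, at each failure of closure, a definite witness pair in $C$ where the vanishing constraint bites, and this argument relies crucially on the injectivity hypothesis on $F$ at shared source or target to rule out alternative arrow pairings that could spare $c|_C$ from being pinned to zero.
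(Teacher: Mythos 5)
This statement is imported verbatim from Crawley-Boevey's paper \cite{crawleytree}; the present paper gives no proof of it, so there is nothing internal to compare your argument against --- the relevant comparison is with the cited source, and your argument is in essence the same direct computation that appears there. Your reduction of the homomorphism condition to the three families of constraints (I)--(III) on the matrix coefficients $c(y',y)$ is correct (the injectivity of $F$ on arrows sharing a source or target is exactly what makes each $X_\alpha$, $Y_\alpha$ a partial permutation of the chosen bases, so the constraints you list are the complete translation of naturality), and the passage to the auxiliary graph $Z$, with solutions constant on components and nonzero only on components whose projections give a factor subtree of $T_X$ and an image subtree of $T_Y$, is the right mechanism. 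Two small points deserve attention. First, you have interchanged the roles of (II) and (III) in the last paragraph: the \emph{target}-type vanishing (your (II)) is what forces $T_X^C$ to be closed under parents, i.e.\ a factor subtree, while the \emph{source}-type vanishing (your (III)) forces $T_Y^C$ to be an image subtree; the witness argument you describe is the correct one for the factor property but invokes the wrong constraint by name. Second, for the converse direction (every graph map $(T_X\fac, T_Y\img, \sigma)$ yields an allowed component and a well-defined morphism) you should verify explicitly that the graph $\{(y,\sigma(y)): y\in T_X\fac\}$ is closed under $Z$-edges in both directions --- this uses the factor property of $T_X\fac$ for backward edges and the image property of $T_Y\img$ for forward edges, together with the uniqueness of $\alpha$-labelled arrows at a given source or target --- and that every arrow of $T_X$ joining two vertices of $T_X^C$ is realised by a $Z$-edge of $C$ (which holds because a connected spanning subgraph of a subtree of a tree is the whole subtree); without this, checking that the elementary morphism $\phi^{G_C}$ actually satisfies (II) and (III) is incomplete. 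With these repairs the proof is sound, and linear independence plus spanning follow exactly as you say from the disjointness of supports.
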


The next result may be known to experts but we could not find it in the literature.
\begin{proposition}\label{projaretree}
The modules $S(i)$ and $P(i)$ are tree modules for each $i\in Q_0$.
\end{proposition}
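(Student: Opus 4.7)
The plan is to handle $S(i)$ and $P(i)$ separately by exhibiting explicit pairs $(T,F)$ in each case and then matching $F_\lambda V_T$ with the module in question. For $S(i)$, take $T$ to be the one-vertex tree (no arrows) and let $F:T\to Q$ send the unique vertex to $i$. The conditions on $F$ are vacuous, and unwinding the definition of $F_\lambda$ shows that $F_\lambda V_T$ is $K$ at vertex $i$, zero elsewhere, with all structure maps zero; this is exactly $S(i)$.

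For $P(i)$, I would construct a rooted tree $T = T^{(i)}$ whose vertices are the nonzero path-classes $\bar p = p+I$, where $p$ ranges over paths in $Q$ starting at $i$ (including the trivial path $e_i$), rooted at $\ast := \bar e_i$. For its arrows: for each vertex $\bar p$ with $p$ ending at some $j\in Q_0$ and each arrow $\gamma: j\to k$ of $Q$ with $\overline{\gamma p}\neq 0$ in $\Lambda$, include one arrow $a_{\gamma,p}$ from $\bar p$ to $\overline{\gamma p}$. Finiteness of $T$ is guaranteed by $\dim_K \Lambda<\infty$. Each non-root vertex $\overline{\alpha p'}$ has a unique incoming arrow, namely $a_{\alpha,p'}$, so the underlying graph of $T$ is acyclic; since every vertex is reached from $\ast$ by a directed path, $T$ is a (rooted) tree in the sense required. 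Define the quiver morphism $F:T\to Q$ by $F(\bar p):= j$ when $p$ ends at $j$, and $F(a_{\gamma,p}):= \gamma$. Two distinct arrows out of a common source $\bar p$ are of the form $a_{\gamma,p}$, $a_{\gamma',p}$ with $\gamma\neq\gamma'$, so $F$ distinguishes them; arrows sharing a target cause no issue because every vertex of $T$ has at most one incoming arrow. Finally, any path $q$ of positive length in $T$ from $\bar p$ to some $\bar{p''}$ satisfies $p'' = F(q)\cdot p$ in $KQ$ with $\bar{p''}\neq 0$; since $I$ is a two-sided ideal, $F(q)\in I$ would force $F(q)\cdot p\in I$, contradicting $\bar{p''}\neq 0$. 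Hence $F(q)\notin I$, verifying the last hypothesis required of $F$.

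To identify $F_\lambda V_T$ with $P(i)$, I would compare the two representations componentwise using Lemma~\ref{lem:projrep}: at each $j\in Q_0$, $(F_\lambda V_T)_j$ has basis $F^{-1}(j) = \{\bar p \mid p \text{ a path from } i \text{ to } j,\ \bar p \neq 0\}$, which matches the basis of $P(i)_j$; and for each arrow $\alpha: j\to k$ of $Q$, the map $(F_\lambda V_T)_\alpha$ sends $\bar p$ to $\overline{\alpha p}$ when $\overline{\alpha p}\neq 0$ and to $0$ otherwise, which is precisely left multiplication by $\bar\alpha$. This gives $F_\lambda V_T\cong P(i)$, so $P(i)$ is a tree module.

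I expect the only mildly delicate point to be the verification that $F(q)\notin I$ for every path $q$ in $T$; this rests on $I$ being a two-sided ideal, together with the fact that the target of $q$ is by construction a nonzero path-class. Everything else amounts to a direct unwinding of the definitions of a rooted tree and of the push-down functor $F_\lambda$.
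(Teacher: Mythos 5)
Your proposal is correct and takes essentially the same approach as the paper: both realize $P(i)$ via the tree of paths starting at $i$ that survive modulo $I$ (the paper grows this tree by an iterative algorithm, while you describe it directly as the set of nonzero path-classes $\bar p$, which coincides with the same tree since the nonzero classes of paths form a basis of a monomial algebra), and both identify $F_\lambda V_T$ with $P(i)$ componentwise via Lemma~\ref{lem:projrep}. Your explicit verifications of the local injectivity condition on $F$ and of $F(q)\notin I$ are correct and in fact slightly more detailed than the paper's.
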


\begin{proof}
The result is trivial for simple modules, and thus we only prove it for projective modules.

Given $i\in Q_0$, we construct a rooted tree $T_i$ together with a quiver morphism $ F_i:T_i\to Q$ as the following algorithm describes.
\begin{enumerate}
    \item[Step 1] Add a vertex $\star_i$ as the root and set $ F_i(\ast_i):=i$. 
    \item[Step 2] Suppose $v$ is a newly added vertex and $p$ is the unique path from $\ast_i$ to $v$ (possibly of length $0$). If $\beta$ is an arrow in $Q$ with source $ F_i(v)$ such that the concatenation $\beta F_i(p)$ does not lie in $I$ then add a new vertex $w$ with an arrow $v\xrightarrow{b}w$ and set $ F_i(b):=\beta$.
    \item[Step 3] Repeat Step 2 until paths in the image of $ F_i$ cannot be extended further.
\end{enumerate}
Since there is an absolute bound on the length of paths in $(Q, \rho)$, the above algorithm terminates when the image of $T_i$ under $ F_i$ has covered all maximal paths in $(Q, \rho)$ with source $i$. Finally, using Lemma \ref{lem:projrep} it is straightforward to verify that the $( F_i)_\lambda V_{T_i}\cong P(i)$, so as to conclude that $P(i)$ is indeed a tree module. 
\end{proof}

\begin{remark}\label{rmk: distinctpathsinTj}
The rooted tree $T_j$ constructed in the proof above has the following property: if $p_1$ and $p_2$ are two maximal paths with source at the same vertex then $ F_i(p_1)\neq F_i(p_2)$. As a consequence, there is no automorphism $\psi$ of $T_i$ other than identity such that $F_i\circ\psi=F_i$.
\end{remark}
The following example illustrates the algorithm in the proof of Proposition \ref{projaretree}.

\begin{example}
    Consider the algebra $KQ'/\langle\rho'\rangle$, where $(Q', \rho')$ is as shown in the left hand side diagram of Figure \ref{Fig:1}. The data $(T_1, F_1)$ associated with the projective indecomposable module $P(1)$ is shown on the right hand side in the same figure, where the value under $F_1$ of the first coordinate is shown in the second coordinate. \begin{figure}[h]\centering

\[\begin{tikzcd}
	1 & 2 & 3 & {(y,3)} & {(v,2)} & {(\ast_1 ,1)} & {(w,2)} \\
	&&&& {(x, 3)} && {(z, 3)}
	\arrow["{(d, \delta)}"', from=1-5, to=1-4]
	\arrow["{(a, \alpha)}"', from=1-6, to=1-5]
	\arrow["{(c_1, \gamma)}", from=1-5, to=2-5]
	\arrow["{(c_2, \gamma)}"', from=1-7, to=2-7]
	\arrow["{(b, \beta)}", from=1-6, to=1-7]
	\arrow["\alpha", bend left, from=1-1, to=1-2]
	\arrow["\delta"',bend right, from=1-2, to=1-3]
	\arrow["\beta"', bend right, from=1-1, to=1-2]
	\arrow["\gamma", bend left, from=1-2, to=1-3]
\end{tikzcd}\]
\caption{A quiver $Q'$ with $\rho' = \{\delta\beta\}$ (left), the data $(T_1, F_1)$ (right) }
\label{Fig:1}
\end{figure}
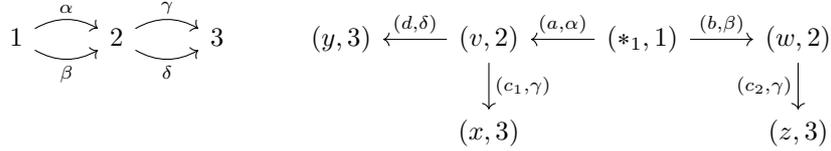
\end{example}

Since socle commutes with direct sums, we get that $\soc{\lreg{\Lambda}}=\bigoplus_{i\in Q_0}\soc{P(i)}$. It is easy to compute $\soc{P(i)}$ using Lemma \ref{lem:projrep} and \cite[Lemma~III.2.2]{assem2006elements}. Furthermore, using Proposition \ref{projaretree} it is possible to relate this computation with the rooted tree $T_i$.
\begin{corollary}\label{cor:socProj}
For $i\in Q_0$, the only simple summands of $\soc{P(i)}$ correspond to $S(j)$, where $j$ is the target of a maximal path in $(Q, I)$ with source $i$ that does not contain a subpath in $I$. In other words, using the notations of the proof of Proposition \ref{projaretree}, we get $$\soc{P(i)}\cong\bigoplus_{\mbox{x is a leaf in $T_i$}}S(F_i(x)).$$
\end{corollary}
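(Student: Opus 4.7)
The plan is to combine the explicit basis description of $P(i)$ from Lemma \ref{lem:projrep} with the standard characterization of the socle as the largest submodule annihilated by the radical $\mathrm{rad}(\Lambda)$, and then translate the resulting condition into the combinatorics of the rooted tree $T_i$ constructed in Proposition \ref{projaretree}.

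First I would recall that $\mathrm{rad}(\Lambda)$ is generated, as a left ideal, by the classes $\bar\gamma$ of arrows $\gamma\in Q_1$, and hence $\soc{P(i)}$ consists exactly of those $v\in P(i)$ with $\bar\gamma\cdot v=0$ for every $\gamma\in Q_1$. Lemma \ref{lem:projrep} gives $P(i)$ a $K$-basis indexed by paths $p$ from $i$ with $p\notin I$, graded by target vertex; within this basis, $\bar p$ (with target $j$) is annihilated by every $\bar\gamma$ precisely when, for every arrow $\gamma$ of $Q$ with source $j$, the concatenation $\gamma p$ lies in $I$. Since $p\notin I$ already, this is exactly the condition that $p$ is a \emph{maximal} path with source $i$ that avoids $I$.

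Next I would observe that each such basis vector $\bar p$ generates a one-dimensional $\Lambda$-submodule, which by \cite[Lemma~III.2.2]{assem2006elements} is isomorphic to the simple $S(j)$, where $j$ is the target of $p$. Because the basis elements $\bar p$ corresponding to distinct maximal paths are $K$-linearly independent and each lies in its own graded piece (up to target vertex), these simple submodules are independent, yielding
\[
\soc{P(i)} \;\cong\; \bigoplus_{p} S(\mathrm{target}(p)),
\]
where $p$ ranges over maximal paths from $i$ in $(Q,I)$. This already establishes the first assertion of the corollary.

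Finally, to pass to the tree-theoretic reformulation, I would invoke the algorithm from the proof of Proposition \ref{projaretree}: by construction, a vertex $x\in T_i$ is a leaf if and only if the unique path from $\ast_i$ to $x$ maps under $F_i$ to a maximal path from $i$ in $(Q,I)$, and then $F_i(x)$ is the target of that path. Remark \ref{rmk: distinctpathsinTj} ensures that distinct leaves produce distinct maximal paths, so the correspondence is bijective and preserves the target vertex, giving the claimed decomposition $\soc{P(i)}\cong\bigoplus_{x\text{ leaf in }T_i}S(F_i(x))$. There is no real obstacle here; the mild subtlety is checking that the passage from ``maximal paths avoiding $I$'' to ``leaves of $T_i$'' is a bijection, but this is precisely what the termination condition of the algorithm together with Remark \ref{rmk: distinctpathsinTj} guarantees.
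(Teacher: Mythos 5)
Your argument is correct and is exactly the computation the paper leaves implicit (it only cites Lemma \ref{lem:projrep} and \cite[Lemma~III.2.2]{assem2006elements}): identify $\soc{P(i)}$ as the annihilator of the radical, which is generated by the arrows, and read off which path-basis vectors are killed, then match maximal paths with leaves of $T_i$ via the algorithm and Remark \ref{rmk: distinctpathsinTj}. The only step you leave tacit is that an \emph{arbitrary} element of the socle, not just a basis vector, must be supported on maximal paths; this is where monomiality of $I$ enters, since left multiplication by $\bar\gamma$ sends distinct basis paths to distinct basis paths or to zero, so the annihilator of the radical is spanned by the annihilated basis vectors.
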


\section{The proof of $(2)\Rightarrow(4)$ of Theorem \ref{main}}
Suppose $\Lambda=KQ/I$ satisfies condition $(2)$. We further assume that $\Lambda\not\cong K$, which is equivalent to $|Q_0|=m>1$. 

Since connectedness of $\Lambda$ is equivalent to the connectedness of $Q$ by \cite[Lemma II.2.5]{assem2006elements}, in order to show that $Q=C_m$, it is sufficient to show that the outdegree of each vertex in $Q$ is exactly $1$.

If the outdegree of a vertex $i$ exceeds $1$ then it follows from Corollary \ref{cor:socProj} that $\soc{P(i)}$ is a direct sum of two or more simple modules, a contradiction to condition $(2)$. Thus the outdegree of all vertices should be at most 1. Moreover, since the quiver is connected at most one vertex in $Q$ can have outdegree $0$. Since $m\geq 1$, we conclude that $Q$ is either a path or a cycle. We will rule out the possibility of a path.

Assume that $Q$ is a path, and that the outdegree of $i_0\in Q_0$ is $0$. Let $i_1\xrightarrow{\alpha}i_0$ be the unique arrow with target $i_0$. Since the admissible ideal $I$ contains paths of length at least $2$, Corollary \ref{cor:socProj} yields that $\soc{P(i_1)}\cong S(i_0) \cong\soc{P(i_0)}$, again a contradiction to injectivity of Nakayama permutation guaranteed by $(2)$. Thus $Q=C_m$. Without loss, we may assume that $Q_1=\{i\xrightarrow{\alpha_i}(i+_m1)\mid i\in Q_0\}$, where $+_m$ is addition modulo $m$ with values lying in $Q_0=\{1,2,\cdots,m\}$.

Now we show that $|\rho|=m$. Clearly $|\rho|>0$ for otherwise $KC_m/\langle\rho\rangle=KC_m$ is an infinite dimensional algebra. On the one hand, if $|\rho|<m$ then there is a vertex, say $i_0$, that does not appear as the source of any path in $\rho$ but $(i_0+_m1)$ does. Then Corollary \ref{cor:socProj} yields that $\soc{P(i_0+_m1)}\cong\soc{P(i_0)}$, yet again a contradiction. On the one hand, if $|\rho|>m$ then by the pigeonhole principle at least two paths in $\rho$ have the same source, which is a contradiction to the assumption on $\rho$ that no two paths are comparable. In fact, we showed that no two distinct paths in $\rho$ have the same source. Let $p_i$ denote the unique path in $\rho$ with source $i$.

Finally, we argue that any two relations in $\rho$ have the same length. Since no two distinct paths in $\rho$ are comparable, we see that $|p_i|\leq|p_{(i+_m1)}|$ for each $i\in Q_0$. Since $Q$ is a cycle, we conclude that $|p_i|=|p_{(i+_m1)}|$ for each $i$. Let $l$ denote the common length of all such paths. Since $\langle\rho\rangle$ is admissible, we conclude that $l>1$ and $\langle\rho\rangle=R_{C_m}^l$.

\section{The proof of $(3)\Rightarrow(2)$ of Theorem \ref{main}}
Suppose $\Lambda=KQ/I$ is socle-injective.  Then for each map $f$ in the left diagram of Figure \ref{fig:soc-inj}, there exists some $g$ such that the diagram commutes. Since $\lreg{\Lambda}=\bigoplus_{i\in Q_0}P(i)$ and socle commutes with direct sums, the inclusion $\soc{\lreg{\Lambda}}\hookrightarrow\lreg{\Lambda}$ can be expressed as the direct sum of the individual inclusions $\soc{P(i)}\hookrightarrow P(i)$.

\begin{figure}[h]
    \centering
    \begin{minipage}{0.45\textwidth}
\[\begin{tikzcd}
	{\mathrm{soc}(\lreg{\Lambda})} && {\lreg{\Lambda}} \\
	& {\lreg{\Lambda}}
	\arrow["{\forall f}"', from=1-1, to=2-2]
	\arrow[hook, from=1-1, to=1-3]
	\arrow["{\exists g}", dotted, from=1-3, to=2-2]
\end{tikzcd}\]    
    \end{minipage}
    \begin{minipage}{0.45\textwidth}
\[\begin{tikzcd}
	{S(i)} && {P(j)} \\
	& {P(q)}
	\arrow["{\forall}"',hook, from=1-1, to=2-2]
	\arrow["{\forall}",hook, from=1-1, to=1-3]
	\arrow["{\exists}",dotted, from=1-3, to=2-2]
\end{tikzcd}\]    
    \end{minipage}
    \caption{Socle-injectivity of $\Lambda$}
    \label{fig:soc-inj}
\end{figure}
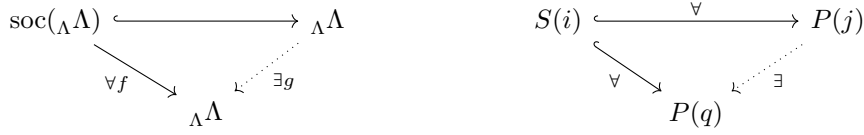

First note that $\mathrm{Hom}_\Lambda(\soc{\lreg{\Lambda}},\lreg{\Lambda})\cong\bigoplus_{i\in Q_0}\bigoplus_{j\in Q_0}\mathrm{Hom}_\Lambda(\soc{P(i)},P(j))$. Moreover, in view of Corollary \ref{cor:socProj}, it is sufficient to compute a basis $\mathrm{Hom}_\Lambda(S(i),P(j))$ for $i,j\in Q_0$. Since the simple module $S(i)$ is a tree module given by the data $(\{\ast\},\ast\mapsto i)$, in view of Theorem \ref{thm:crawboe}, the set of graph maps is in bijective correspondence with the set of image subtrees of $(T_j,F_j)$ isomorphic to $(\{\ast\},\ast\mapsto i)$, which the set of leaves $v$ in $T_j$ satisfying $F_j(v)=i$ in view of Remark \ref{rmk:facimgrootedtree}(3). Thus the dimension of the space $\mathrm{Hom}_\Lambda(S(i),P(j))$ is equal to the multiplicity of $S(i)$ as a summand in $\soc{P(j)}$. As a result, the left hand side diagram of Figure \ref{fig:soc-inj} can be understood in terms of several diagrams of the type described on the right hand side of the same figure.

Consider the right side diagram in Figure \ref{fig:soc-inj}, i.e., $S(i)$ occurs as a direct summand of $\soc{P(j)}$ and $\soc{P(q)}$. Choose maximal paths $p_1$ and $p_2$ in $T_j$ and $T_q$ respectively corresponding to these simple summands of the socles. Then socle-injectivity ensures the existence of a graph map $(T_j\fac,T_q\img,\sigma:T_j\fac\to T_q\img)$  from $P(j)$ to $P(q)$ that maps $p_1$ to $p_2$.

Suppose $j\neq q$. Since $\ast_j\in T_j\fac$ by Remark \ref{rmk:facimgrootedtree}(1) and $F_q(\sigma(\ast_j))=F_j(\ast_j)=j$, we get that $\sigma(\ast_j)\neq\ast_q$. Let $p$ be the unique positive length path in $T_q$ from $\ast_q$ to $\sigma(\ast_j)$. Since the isomorphism $\sigma$ preserves lengths, we see that $|p_2|=|p|+|\sigma(p_1)|=|p|+|p_1|$, and hence $|p_1|<|p_2|$. Since the roles of $P(j)$ and $P(q)$ can be reversed in the right side diagram in Figure \ref{fig:soc-inj}, we can also obtain $|p_1|>|p_2|$ using a similar argument, which is absurd. Therefore, socles of distinct projective indecomposable modules do not share a simple summand.

Now suppose $j=q$ but $p_1\neq p_2$. If $\sigma(\ast_j)=\ast_j$ then $\ast_j\in T_j\img$, and hence $T_j=T_j\img$ by Remark \ref{rmk:facimgrootedtree}(2). Since $p_1\neq p_2$ but $\sigma(p_1)=p_2$, we observe that $\sigma$ is a non-trivial automorphism of $T_j$, which is a contradiction to Remark \ref{rmk: distinctpathsinTj}. Therefore, $\sigma(\ast_j) \neq\ast_j$. Now using an argument similar to the above paragraph we conclude that the multiplicity of a simple summand of the socle of an projective indecomposable module is at most $1$.

The conclusions of the above two paragraphs guarantee the existence of a Nakayama permutation, thereby proving $(2)$.

\section*{Declarations}
\subsection*{Ethical approval}
Not applicable

\subsection*{Competing interests}
Not applicable

\subsection*{Authors' contribution}
All authors have contributed equally to all sections.

\subsection*{Funding}
Not applicable
\subsection*{Availability of data and materials}
Not applicable

\printbibliography
\end{document}